\documentclass{article}
\usepackage[autostyle]{csquotes}
\usepackage{extarrows,mathtools,amsmath,amssymb,enumerate,hyperref}

\usepackage{amsthm}
\newtheorem{theorem}{Theorem}[section]
\newtheorem{proposition}[theorem]{Proposition}
\newtheorem{lemma}[theorem]{Lemma}
\newtheorem{corollary}[theorem]{Corollary}
\theoremstyle{remark}
\newtheorem*{remark}{Remark}
\newtheorem{question}{Question}
\DeclareMathOperator{\Syl}{Syl}
\DeclareMathOperator{\SL}{SL}
\DeclareMathOperator{\im}{Im}

\renewenvironment{abstract}
 {\par\noindent\textbf{\abstractname.}\ \ignorespaces}
 {\par\medskip}
 
\newenvironment{acknowledgements}{%
  \renewcommand{\abstractname}{Acknowledgements}
  \begin{abstract}
}{%
  \end{abstract}
}

\newenvironment{funding}{%
  \renewcommand{\abstractname}{Funding}
  \begin{abstract}
}{%
  \end{abstract}
}

\newenvironment{conflicts}{%
  \renewcommand{\abstractname}{Conflicts of interests}
  \begin{abstract}
}{%
  \end{abstract}
}

\begin{document}
\title{Finite groups with integer harmonic mean of element orders}
\author{Iulia C\u at\u alina Ple\c sca\footnote{
Faculty of Mathematics of "Al. I. Cuza" University of Ia\c si, Romania, 
e-mail: dankemath@yahoo.com
ORCID: 0000-0001-7140-844X}\text{ } and Marius T\u arn\u auceanu\footnote{Faculty of Mathematics of
"Al. I. Cuza" University of Ia\c si, Romania,
e-mail: tarnauc@uaic.ro}}
\date{September 9, 2023}
\maketitle

\begin{abstract}
In this paper, we introduce a new function computing the harmonic mean of element orders of a finite group. We present a series of properties for this function, and then we study groups for which the value of the function is an integer.
\end{abstract}
\section{Introduction}
Throughout the article, let $G$ be a finite group and denote by $o(a)$ the order of an element $a\in G$. 

In the last three decades, an increasing number of functions and concepts from number theory have been adapted to group theory. For example, Leinster introduced what would later be called Leinster groups \cite{Leinster}, a group analogue of perfect numbers. Subsequently, a plethora of arithmetic functions that involve the orders of elements/subgroups of groups have been introduced. We refer the reader to \cite{article4} for a recent survey on this topic.

Special attention has also been given to various means of element orders: arithmetic in \cite{Silviu,article5} and geometric in \cite{article5}. In our paper, we will focus on the harmonic mean. Let us recall the functions that represent the starting point for our work.

Two of the most common functions in number theory associated to a positive integer $n\in\mathbb{N}^*$ are the number of its divisors $\tau(n)$ and the sum of its divisors $\sigma(n)$. These two functions have been adapted to group theory as follows. Given a finite group, $G$, $\tau(G)$ represents the number of its normal subgroups and $\sigma(G)$ the sum of their orders.

These latter two appear in the definition of a new function \[H(G)=|G|\frac{\tau(G)}{\sigma(G)}\,,\] introduced in \cite{article1} that was meant to generalize the arithmetic function $H(n)=n\tau(n)/\sigma(n)$.

The last function that inspired us, the sum of inverses of the orders of elements \[m(G)=\sum_{a\in G}\frac{1}{o(a)}\,,\] was introduced in \cite{article2} and, similarly to other sum functions, can give characterizations for commutativity, ciclicity and nilpotency.

Inspired by these, we introduce the function \enquote{the harmonic mean of element orders of a finite group}, i.e.
\begin{equation}
h_m(G)=\frac{|G|}{m(G)}\,.
\end{equation}

Throughout this article we put a dent in the following problem:

\begin{question}
Which are the finite groups $G$ with $h_m(G)\in\mathbb{N}$?
\end{question}

We start by giving some immediate properties of the newly introduced function including a lower bound that is reached only by finite $p$-groups. We continue by characterizing the finite $p$-groups with integer $h_m$. In the end, we observe that $h_m^{-1}(2)=\{C_4, D_8\}$ and study $h_m^{-1}(3)$.

We use the following notations for some of the most common groups. For $m,n\in\mathbb{N}^*$, $C_n$ denotes the cyclic group of order $n$, $D_n$ the dihedral group of order $n$, $S_n$ the symmetric group of order $n$, $SD_{n}$ the semidihedral group of order $n$, $SL(m,n)$ the special linear group of degree $m$ over a field of $n$ elements. In addition, $Q_8$ is the quaternion group. The rest of the notations are standard. Basic notions and results on groups can be found in \cite{book2}.

\section{Main results}

A series of inequalities for the function $m$ are given in \cite{article2} (Lemmas 1.3, 2.2, 2.3, 2.4, 2.6). We summarize these in the first proposition:

\begin{proposition}[\cite{article2}]\label{propm}
The following properties hold for the function $m$:
\begin{enumerate}[a)]
\item
If $|G|=n$, then $m(C_n)\leq m(G)$, with equality if and only if $G\cong C_n$.
\item
If $H\leq G$, then $m(H)\leq m(G)$, with equality if and only if $H=G$.
\item
If $N\unlhd G$, then $m(G/N)\leq m(G)$, with equality if and only if $N=1$.
\item
If $P$ is a normal cyclic Sylow $p$-subgroup of $G$: $G \in \Syl_p(G)$, then $m(Px) \geq m(P)/o(Px)$, where $Px \in G/P$. Equality holds if and only if x centralizes $P$. Also, $m(G) \geq m(P)m(G/P)$ with equality if and only if $P$ is central in $G$.
\item
If $G_1, G_2$ are finite groups, then $m(G_1 \times G_2) \geq m(G_1)m(G_2)$, with equality if and only if the orders of the groups are coprime: $\gcd(|G_1|, |G_2|) = 1$.

\end{enumerate}
\end{proposition}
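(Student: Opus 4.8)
The plan is to treat the five items separately, since each concerns a different construction, and to quarantine part (d) as the only genuine difficulty; the remaining parts reduce to the fact that $m$ is a sum of positive terms together with elementary arithmetic of element orders.

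Parts (b) and (e) I would dispatch first. For (b), the order of an element does not depend on the ambient group, so $m(H)=\sum_{a\in H}1/o(a)$ is a subsum of $m(G)=\sum_{a\in G}1/o(a)$ made of strictly positive terms; hence $m(H)\le m(G)$, with equality exactly when $G\setminus H=\varnothing$. For (e) I would use $o((a,b))=\operatorname{lcm}(o(a),o(b))$ and the identity $\operatorname{lcm}(r,s)=rs/\gcd(r,s)\le rs$, so that the termwise comparison of
\[ m(G_1\times G_2)=\sum_{a\in G_1,\,b\in G_2}\frac{1}{\operatorname{lcm}(o(a),o(b))}\quad\text{and}\quad m(G_1)m(G_2)=\sum_{a\in G_1,\,b\in G_2}\frac{1}{o(a)o(b)} \]
gives the inequality; equality forces $\gcd(o(a),o(b))=1$ for all $a,b$, which by Cauchy's theorem is equivalent to $\gcd(|G_1|,|G_2|)=1$.

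For (c) I would partition $G$ into the cosets of $N$ and argue coset by coset. Fix a coset $gN$ and set $d=o(gN)$. Every $x\in gN$ has $x^d\in N$, so $x^{d|N|}=1$ and thus $o(x)\le d|N|$, giving $1/o(x)\ge 1/(d|N|)$. Summing the $|N|$ terms of the coset yields $\sum_{x\in gN}1/o(x)\ge |N|\cdot\frac{1}{d|N|}=\frac{1}{o(gN)}$, and summing over all cosets gives $m(G)\ge m(G/N)$. For the equality case I would examine the coset $N$ itself, where $d=1$: equality there would require every element of $N$ to have order $|N|$, which is impossible for the identity unless $|N|=1$. For (a), the cleanest route I see runs through the solution-counting function $N_e(G)=\#\{x\in G:x^e=1\}$. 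Writing $a_d(G)$ for the number of elements of order $d$ and Möbius-inverting $N_d=\sum_{e\mid d}a_e$, I would obtain the closed form
\[ m(G)=\frac{1}{n}\sum_{e\mid n}\phi(n/e)\,N_e(G), \]
the coefficient of $N_e$ being $\tfrac1e\prod_{p\mid(n/e)}(1-\tfrac1p)=\phi(n/e)/n$. Since $e\mid n$, Frobenius's theorem forces $e\mid N_e(G)$, and as $N_e(G)\ge 1$ this gives $N_e(G)\ge e=N_e(C_n)$ for every $e\mid n$; all coefficients $\phi(n/e)$ being nonnegative, the formula yields $m(G)\ge m(C_n)$ termwise, with equality iff $N_e(G)=e$ for all $e\mid n$, which is precisely the classical criterion for $G\cong C_n$.

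Part (d) is where I expect the real work. With $P=\langle g\rangle$ a cyclic normal Sylow $p$-subgroup of order $p^a$, the elements of a coset $Px$ are exactly $g^ix$ for $0\le i<p^a$, so both $m(Px)\ge m(P)/o(Px)$ and its product form $m(G)\ge m(P)m(G/P)$ hinge on controlling the orders $o(g^ix)$. These are governed by the conjugation automorphism $g\mapsto x^{-1}gx=g^r$ that $x$ induces on $P$ and by the order of $x^k$ inside $P$, where $k=o(Px)$. The main obstacle will be to bound $\sum_i 1/o(g^ix)$ from below using the arithmetic of this action in $\operatorname{Aut}(C_{p^a})$, and to show the bound is attained exactly when the action is trivial, i.e. when $x$ centralizes $P$; the central case of the product statement then follows by summing the coset estimates over a transversal of $P$ in $G$.
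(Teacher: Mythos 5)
First, a point of comparison: the paper does not prove this proposition at all --- it is imported wholesale from \cite{article2} (Lemmas 1.3, 2.2, 2.3, 2.4, 2.6 there), so your attempt is being measured against a citation, not an in-paper argument. On that footing, your treatments of (b), (c) and (e) are correct and complete: the subsum observation for (b), the coset-by-coset bound $o(x)\le o(gN)\,|N|$ for (c) together with the identity-coset analysis of the equality case, and the termwise comparison via $o((a,b))=\operatorname{lcm}(o(a),o(b))$ plus Cauchy's theorem for (e) all work exactly as you describe. Your proof of (a) is also correct and is the nicest part of the write-up: the M\"obius-inverted formula $m(G)=\frac1n\sum_{e\mid n}\phi(n/e)N_e(G)$ checks out, Frobenius gives $e\mid N_e(G)$ hence $N_e(G)\ge e=N_e(C_n)$, and equality for all $e\mid n$ is indeed the classical criterion for cyclicity.

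The genuine gap is part (d), which you correctly identify as the hard case but then do not prove: what you give is a description of the obstacle, not an argument. Concretely, the missing step is the lower bound $\sum_{y\in Px}1/o(y)\ge m(P)/o(Px)$. A reduction you should make explicit is this: since $P$ is a Sylow $p$-subgroup, $k:=o(Px)$ is coprime to $p$; for $y\in Px$ one has $k\mid o(y)$ and $y^k\in P$, and the coprimality forces $o(y)=k\cdot o(y^k)$, so the claim becomes $\sum_{y\in Px}1/o(y^k)\ge\sum_{z\in P}1/o(z)$. Even after this reduction the real content remains: one must compare the multiset $\{o(y^k):y\in Px\}$ with the multiset of orders in the cyclic group $P$, using the action of $x$ on $P=\langle g\rangle$ by an automorphism $g\mapsto g^r$, and show the inequality with equality exactly when that action is trivial. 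None of this is carried out, and it is not a routine verification (note, e.g., that in $S_3$ with $P=C_3$ and $x$ a transposition the map $y\mapsto y^k$ collapses the whole coset to the identity, so no bijectivity argument is available). The deduction of $m(G)\ge m(P)m(G/P)$ by summing over cosets is fine \emph{granted} the per-coset bound, but as it stands part (d) is unproven and the proposal is incomplete.
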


We deduce a corresponding proposition for $h_m$:

\begin{proposition}
The following properties hold:
\begin{enumerate}[a)]
\item
If $|G|=n$, then $h_m(G)\leq h_m(C_n)$. Equality holds if and only if $G\cong C_n$.
\item
For a subgroup $H\leq G$, it follows that $h_m(G)\leq [G:H]h_m(H)$. Equality holds if and only if $H=G$.
\item
For a normal subgroup $H\lhd G$, it follows that $h_m(G)\leq |H|h_m(\frac{G}{H})$. Equality holds if and only if $H$ is the trivial subgroup.
\item
If $P$ is a normal cyclic Sylow $p$-subgroup of $G$: $G \in \Syl_p(G)$, then $h_m(G)\leq h_m(P)h_m(\frac{G}{P})$. Equality holds if and only if $P$ is central in $G$.
\item
$h_m$ is multiplicative: for all finite groups $G_1, G_2$ of coprime orders, we have $h_m(G_1\times G_2)=h_m(G_1)h_m(G_2)$. This shows that the study of the function $h_m$ for finite nilpotent groups can be reduced to $p$-groups.
\end{enumerate}
\end{proposition}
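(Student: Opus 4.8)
The plan is to derive every clause directly from the matching clause of Proposition~\ref{propm}, using nothing beyond the defining identity $h_m(G)=|G|/m(G)$ and the fact that $m(G)>0$ for every finite group. Since $m$ is positive, for a group of fixed order the map $t\mapsto |G|/t$ is strictly decreasing, so each inequality proved for $m$ will flip into the reverse inequality for $h_m$, and each equality case will transfer verbatim. The one point that genuinely needs care is that the order-prefactors occurring in parts b)--d) are precisely what is required to cancel the differing group-order terms, leaving a clean comparison of $m$-values; getting the direction of the inequality and this cancellation right is really the whole content.

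For a) I would simply observe that both groups have the same order $n$, so $h_m(G)=n/m(G)\le n/m(C_n)=h_m(C_n)$ is exactly part a) of Proposition~\ref{propm}, with the same equality condition. For b)--d) the key manipulation is to push the prefactor through the definition so that the order terms collapse. Concretely, for $H\le G$ I would write $[G:H]\,h_m(H)=\frac{|G|}{|H|}\cdot\frac{|H|}{m(H)}=\frac{|G|}{m(H)}$, so that $h_m(G)\le[G:H]\,h_m(H)$ is equivalent to $m(H)\le m(G)$, which is part b) of Proposition~\ref{propm}. The identical bookkeeping covers c), where $|H|\,h_m(G/H)=|G|/m(G/H)$ reduces the claim to $m(G/H)\le m(G)$, and d), where $h_m(P)\,h_m(G/P)=\frac{|P|}{m(P)}\cdot\frac{|G|/|P|}{m(G/P)}=\frac{|G|}{m(P)\,m(G/P)}$ reduces it to $m(P)\,m(G/P)\le m(G)$. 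In each case the equality characterization ($H=G$, $H$ trivial, $P$ central respectively) is inherited unchanged from Proposition~\ref{propm}.

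Finally, for e) coprimality gives $m(G_1\times G_2)=m(G_1)\,m(G_2)$ by part e) of Proposition~\ref{propm}, whence $h_m(G_1\times G_2)=\frac{|G_1|\,|G_2|}{m(G_1)\,m(G_2)}=h_m(G_1)\,h_m(G_2)$; iterating this over the decomposition of a finite nilpotent group into its Sylow subgroups (which have pairwise coprime orders) yields the asserted reduction to $p$-groups. I do not expect a real obstacle anywhere, since the statement is essentially a dictionary translation through $h_m=|G|/m$; the only thing to watch is that the prefactors in b)--d) match the order discrepancies exactly so that the reduction to an $m$-inequality is genuinely an equivalence rather than a one-sided implication.
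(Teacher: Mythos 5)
Your proposal is correct and is exactly the derivation the paper intends: the paper states this proposition as an immediate consequence of Proposition~\ref{propm} (omitting the routine verification), and your translation through $h_m=|G|/m$ with the prefactors absorbing the order discrepancies is precisely that argument, including the transfer of the equality cases.
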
 

Throughout this study, a lower bound for the function $h_m$ is needed and can be obtained from the following lemma.

\begin{lemma}\label{lemma:2.1}
Let $G$ be a finite group, $C(G)=\{H\leq G\mid H \text{ cyclic}\}$ and $p$ the smallest prime divisor of $|G|$. Then the following inequality holds:
\begin{equation}\label{eq:1}
h_m(G)\geq \frac{p|G|}{(p-1)|C(G)|+1}.
\end{equation}
Equality holds if and only if $G$ is a $p$-group.
\end{lemma}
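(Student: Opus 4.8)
The plan is to reinterpret $m(G)$ as a sum indexed by the cyclic subgroups of $G$ and then bound the generic term. Since every element of $G$ generates a unique cyclic subgroup, and a cyclic subgroup $H$ of order $d$ has exactly $\phi(d)$ generators, each of order $d$, collecting the elements of $G$ according to the subgroup they generate gives
\begin{equation*}
m(G)=\sum_{H\in C(G)}\frac{\phi(|H|)}{|H|}.
\end{equation*}
This identity is the crucial reformulation: it replaces the sum over the $|G|$ elements by a sum over the $|C(G)|$ cyclic subgroups, matching the combinatorial data on the right-hand side of the asserted inequality.

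Next I would establish a uniform bound on each summand. The trivial subgroup contributes $\phi(1)/1=1$. For a nontrivial cyclic $H$ of order $d>1$ I would use the product formula $\phi(d)/d=\prod_{q\mid d}(1-1/q)$ over the prime divisors $q$ of $d$, together with the observation that this product is at most any single factor $1-1/q$; the target is the clean estimate $\phi(|H|)/|H|\le (p-1)/p$ for every nontrivial $H$. Granting this, summing over $C(G)$ gives
\begin{equation*}
m(G)\le 1+\frac{p-1}{p}\bigl(|C(G)|-1\bigr)=\frac{(p-1)|C(G)|+1}{p},
\end{equation*}
and since $h_m(G)=|G|/m(G)$, inverting the bound yields exactly the asserted lower bound for $h_m(G)$.

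For the equality characterization I would trace the estimate backwards. Equality forces $\phi(|H|)/|H|=(p-1)/p$ for every nontrivial cyclic subgroup $H$, and this happens precisely when $|H|$ is a power of $p$. Hence equality is equivalent to every element of $G$ having $p$-power order, which by Cauchy's theorem means that $p$ is the only prime dividing $|G|$, i.e.\ that $G$ is a $p$-group. The converse is immediate: in a $p$-group every nontrivial cyclic subgroup has $p$-power order, so the summand estimate is an equality throughout and the bound is tight.

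The main obstacle is the uniform per-subgroup estimate $\phi(|H|)/|H|\le (p-1)/p$ together with its sharp equality case. The delicate point is the comparison $1-1/q\le 1-1/p$: it holds exactly when $q\le p$, so making the estimate valid for every prime $q$ dividing the order of a cyclic subgroup forces $p$ to dominate those primes, and it is this pinning down of the extremal prime divisor of $|G|$ — together with the verification that equality isolates precisely the prime-power orders, hence the $p$-groups — that demands the most care.
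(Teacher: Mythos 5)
Your reformulation $m(G)=\sum_{H\in C(G)}\varphi(|H|)/|H|$ is exactly the paper's starting point (the paper groups the cyclic subgroups by common order and writes the same quantity as $\sum_i n_i'\varphi(d_i)/d_i$), and your plan then tracks the paper's proof step for step: bound each nontrivial summand by $(p-1)/p$, sum over $C(G)$, and invert. The problem is precisely the step you single out as delicate and never actually carry out: the uniform estimate $\varphi(|H|)/|H|\le (p-1)/p$ for every nontrivial cyclic $H$. You correctly note that deducing it from a single factor $1-1/q$ of $\prod_{q\mid |H|}(1-1/q)$ requires $q\le p$; but $p$ is the \emph{smallest} prime divisor of $|G|$, so every prime $q$ dividing $|H|$ satisfies $q\ge p$, and the factor comparison runs in the wrong direction unless $p$ actually divides $|H|$. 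Concretely, for $|H|=3$ and $p=2$ one has $\varphi(3)/3=2/3>1/2$. So the per-subgroup bound is false whenever $G$ contains a nontrivial element of order coprime to $p$, and your proposal flags the difficulty but does not close it.

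This gap is not yours alone: the paper's own proof performs the identical majorization $\prod_{q\mid d_i}(1-1/q)\le \frac{p-1}{p}$ without justification, and the lemma fails as stated. For $G=S_3$ one has $p=2$, $|C(S_3)|=5$ and $m(S_3)=1+\frac{3}{2}+\frac{2}{3}=\frac{19}{6}>3=\frac{(p-1)|C(S_3)|+1}{p}$, i.e.\ $h_m(S_3)=\frac{36}{19}<2$, violating \eqref{eq:1}; moreover $C_6$ attains equality in \eqref{eq:1} without being a $p$-group, so the equality characterization fails too. The estimate (hence the argument) is sound exactly when every nontrivial element order is divisible by $p$ --- in particular for $p$-groups, which is where the lemma is actually invoked in Theorem \ref{th:2.2} --- and your equality analysis (equality iff all element orders are $p$-powers iff $G$ is a $p$-group, via Cauchy) is fine once the inequality is available. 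But as a proof of the lemma for an arbitrary finite group the argument cannot be completed along these lines, because the statement being proved is not true in that generality.
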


\begin{proof}
Let $d_1=1, d_2=p,\dots, d_r$ be the orders of the elements in $G$.

For all $i\in\overline{1,r}$, we introduce the following notations:
$$n_i=|\{a\in G\mid o(a)=d_i\}| \mbox{ and } n_i'=|\{H\in C(G)\mid |H|=d_i\}|.$$

Then the following relations hold:
\[
\begin{aligned}
m(G)&=\sum_{i=1}^r \frac{n_i}{d_i}=\sum_{i=1}^r n_i'\frac{\varphi(d_i)}{d_i}=1+\sum_{i=2}^r n_i'\frac{\varphi(d_i)}{d_i}\\
&=1+\sum_{i=2}^r n_i'\prod_{q|d_i, q \text{ prime}}\left(1-\frac{1}{q}\right)\\&\leq 1+\frac{p-1}{p}(n_2'+\dots+n_r')=\frac{(p-1)|C(G)|+1}{p}\,,
\end{aligned}
\]
which give \eqref{eq:1}.

The equality case happens if and only if $p$ is the only prime divisor of $|G|$, which holds if and only if $G$ is a $p$-group.
\end{proof}

\begin{remark}
Since $|C(G)|\leq |G|$, \eqref{eq:1} gives the anticipated lower boundary for $h_m$ in terms of the smallest prime
divisor $p$ of $|G|$: \[h_m(G)\geq \frac{p|G|}{(p-1)|G|+1}\,.\]
\end{remark}

For finite $p$-groups $G$ we can establish when $h_m(G)\in\mathbb{N}$ through the following result.

\begin{theorem}\label{th:2.2}
Let $G$ be a finite $p$-group. Then $h_m(G)\in\mathbb{N}$ if and only if $G$ is cyclic of order $p^{\sum_{i=1}^s p^i}$, with $s\in\mathbb{N}^*$, or $G\simeq D_8$.
\end{theorem}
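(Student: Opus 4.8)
The plan is to reduce the question to a counting problem about cyclic subgroups. Since $G$ is a $p$-group, Lemma~\ref{lemma:2.1} holds with equality, so writing $|G|=p^n$ and $c=|C(G)|$ we get $h_m(G)=p^{\,n+1}/\bigl((p-1)c+1\bigr)$. As the denominator is a positive divisor of a power of $p$, it must itself be a power of $p$, so $h_m(G)\in\mathbb{N}$ iff $(p-1)c+1=p^{k}$ for some $k$, i.e. iff
\[
c=\frac{p^{k}-1}{p-1}=1+p+\cdots+p^{k-1},\qquad h_m(G)=p^{\,n+1-k}.
\]
Thus the problem becomes: which $p$-groups have exactly $1+p+\cdots+p^{k-1}$ cyclic subgroups? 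My basic tool is the identity obtained by sorting the non-identity elements by order: if $c_j$ counts the cyclic subgroups of order $p^j$, then $\sum_{j\ge1}c_j\varphi(p^j)=p^n-1$, and dividing by $p-1$ gives
\begin{equation}\tag{$\dagger$}
\sum_{j=1}^n c_j\,p^{\,j-1}=\frac{p^n-1}{p-1}.
\end{equation}

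First I would settle the cyclic case: for $G\cong C_{p^n}$ one has $c=n+1$, so the condition reads $n+1=1+p+\cdots+p^{k-1}$, i.e. $n=\sum_{i=1}^{k-1}p^i$, which with $s=k-1$ is exactly the stated family. For the converse I subtract the target value $c=\frac{p^k-1}{p-1}$ from $(\dagger)$ (the $j=1$ term drops) to obtain
\begin{equation}\tag{$\star$}
\sum_{j=2}^{n}c_j\,(p^{\,j-1}-1)=\frac{p^{\,n}-p^{\,k}}{p-1}+1 .
\end{equation}
The left side is a sum of non-negative terms, so the right side is $\ge0$, which forces $k\le n$; in particular every admissible value $h_m(G)$ is a positive power of $p$.

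Now assume $G$ is non-cyclic, so $c_n=0$. If $G$ has a unique subgroup of order $p$, then the classical classification of such $p$-groups forces $p=2$ and $G\cong Q_{2^n}$ (for odd $p$ it would make $G$ cyclic); a direct count gives $|C(Q_{2^n})|=n+2^{\,n-2}$, and $n+2^{\,n-2}+1$ lies strictly between $2^{\,n-2}$ and $2^{\,n-1}$ for $n\ge5$ (and equals $6,9$ for $n=3,4$), so it is never a power of $2$, eliminating generalized quaternion groups. Otherwise $c_1\ge p+1$, and since $(\dagger)$ excludes elementary abelian groups, $G$ has an element of order $p^2$, whence $c\ge1+(p+1)+1=p+3>p+1$ and $k=2$ is impossible. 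The decisive case is $k=n$: here $(\star)$ reads $\sum_{j\ge2}c_j(p^{\,j-1}-1)=1$, and since each $p^{\,j-1}-1\ge p-1\ge1$ this forces $p=2$, $c_2=1$, $c_j=0$ for $j\ge3$. So $G$ is a $2$-group of exponent $4$ with a unique cyclic subgroup $\langle x\rangle\cong C_4$; this subgroup is characteristic, hence normal, and conjugation $G\to\operatorname{Aut}(C_4)\cong C_2$ shows $[G:C_G(x)]\le2$. If some involution $y\in C_G(x)$ lay outside $\langle x\rangle$, then $xy$ would be a second order-$4$ element outside $\langle x\rangle$, contradicting $c_2=1$; hence $C_G(x)=\langle x\rangle$ and $|G|\le8$. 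The only group of order $8$ with a single cyclic subgroup of order $4$ is $D_8$.

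The remaining and hardest step is to exclude the intermediate range $3\le k\le n-1$ for non-cyclic, non-quaternion $G$. Reducing $(\star)$ modulo $p$ or $p^2$ only reproduces the necessary congruence $c\equiv1\pmod p$ and gives no contradiction, so the obstruction must be group-theoretic rather than arithmetic; moreover a crude size bound is hopeless, since abelian groups such as $C_{p^{n-1}}\times C_p$ already have very few cyclic subgroups (for $p=2$, exactly $2n$). These low-$c$ groups are instead killed by the residue test, $|C(C_{2^{n-1}}\times C_2)|=2n$ being even while $\frac{p^k-1}{p-1}\equiv1\pmod p$, so my plan is to combine the exact identity $(\dagger)$ with $c\equiv1\pmod p$ and with known lower bounds on the number of cyclic subgroups of a $p$-group to reduce the candidates to the $2$-groups of maximal class (dihedral and semidihedral, quaternion having been handled), which I then finish by explicit counts: e.g. $|C(D_{2^n})|=2^{\,n-1}+n$ equals $2^{k}-1$ only at $n=3$, i.e. only for $D_8$, and the semidihedral case is similar. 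I expect this reduction to the maximal-class case—quantifying the trade-off whereby a small exponent forces $c_1$ large through $(\dagger)$, while a large exponent forces a long chain of cyclic subgroups—to be the main obstacle.
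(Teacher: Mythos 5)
Your reduction to the counting problem ($h_m(G)=p^{n+1-k}$ iff $|C(G)|=1+p+\cdots+p^{k-1}$), your treatment of the cyclic case, your exclusion of generalized quaternion groups, and your analysis of the extreme case $k=n$ (which correctly isolates $D_8$) are all sound and match the paper's framework. But the proof is not complete: you explicitly defer the case $3\le k\le n-1$ for non-cyclic $G$, calling it ``the main obstacle,'' and the plan you sketch for it --- combining $(\dagger)$ with lower bounds on numbers of cyclic subgroups to somehow reduce to maximal class --- is not carried out and is not obviously executable. That case is the heart of the theorem, so as it stands this is a genuine gap, not a stylistic difference.

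The missing ingredient is precisely the congruence you brush past when you say the residue test ``gives no contradiction.'' It does, once you invoke the classical subgroup-counting theorems (Theorems 1.10 and 1.17 in Berkovich, \emph{Groups of Prime Power Order}, going back to Kulakoff and Miller), which is what the paper does. For $p$ odd and $G$ non-cyclic, the number of subgroups of order $p$ is $\equiv 1+p \pmod{p^2}$ and the number of cyclic subgroups of order $p^i$ ($i\ge 2$) is $\equiv 0\pmod p$; hence $|C(G)|\equiv 2\pmod p$, which is incompatible with the required $|C(G)|\equiv 1\pmod p$ for every $k$ at once --- no case split on $k$ is needed. For $p=2$ and $G$ non-cyclic and not of maximal class, the analogous counts give $|C(G)|$ even, incompatible with the required odd value $2^k-1$. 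This instantly reduces the non-cyclic case to the $2$-groups of maximal class $D_{2^n}$, $Q_{2^n}$, $SD_{2^n}$, which are finished by the explicit counts you already have in hand. If you add these two citations (or reprove the congruences), your argument closes; without them, the theorem is unproved for the intermediate range of $k$.
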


\begin{proof}
Let $|G|=p^n$, $\exp(G)=p^m$ and 
$$n_i=|\{a\in G\mid o(a)=p^i\}| \mbox{ and } n_i'=|\{H\in C(G)\mid |H|=p^i\}|, \,\forall i\in\overline{0,m}.$$

Lemma \ref{lemma:2.1} implies that
\[ h_m(G)=\frac{p^{n+1}}{(p-1)|C(G)|+1}\,,\]
which leads to the following equivalences:
\begin{equation}\label{hm(G)}
\begin{aligned} h_m(G)\in\mathbb{N}&\text{ if and only if } (p-1)|C(G)|+1=p^t \text{ with }t\leq n+1\\& \text{ if and only if } |C(G)|=p^{t-1}+p^{t-2}+\dots+p+1, \text{ with }t\leq n.\end{aligned}
\end{equation}
For $p=2$, this formula becomes 
\begin{equation}\label{p2}
|C(G)|=2^t-1.
\end{equation}
Going back to the proof, we distinguish the following two cases:

\begin{description}
\item[Case 1: $p$] odd. There are two possibilities: $G$ can be cyclic or not.
\
\begin{enumerate}[a)]
\item Suppose $G$ is not cyclic

Using Theorem 1.10 from \cite{book1}, it follows that $\begin{cases}n_1'\equiv p+1 \pmod{p^2}\\
n_2',\dots,n_m'\equiv 0\pmod p\end{cases}$ and so, recalling that the trivial subgroup is an element of $C(G)$, we obtain $|C(G)|\equiv 2 \pmod p$. Thus there are no solutions.

\item \label{case 1:cyclic}
Suppose $G$ is cyclic

From \eqref{hm(G)}, it follows that $n+1= p^{t-1}+p^{t-2}+\dots+p+1, \text{ with }t\leq n\xRightarrow{s=t-1} n=\sum_{i=1}^s p^i.$
\end{enumerate}
\item[Case 2: $p=2$]\label{case:2}
\
\begin{enumerate}[a)]

\item
Suppose $G$ is not cyclic and $G$ is not of maximal class

Using Theorem 1.17 from \cite{book1}, it follows that $\begin{cases}n_1'\equiv 3 \pmod{4}\\
n_2',\dots,n_m'\equiv 0\pmod 2\end{cases}$ and so $|C(G)|\equiv 0 \pmod 2$. Thus there are no solutions.

\item
Suppose $G$ is cyclic 

Therefore we have $|C(G)|=n+1$. Using \eqref{p2}, it follows that $n=2^t-2\xlongequal{s=t-1}\sum_{i=1}^s 2^i$ with $1\leq s\leq n+1$.

\item
Suppose $G$ is of maximal class 

It follows that $G\in\{D_{2^n}, Q_{2^n}, SD_{2^n}\}$. Using \cite{article3}, the following analysis is obtained:
\begin{enumerate}[i)]
\item
$G\cong D_{2^n}\Rightarrow |C(G)|=2^{n-1}+n\Rightarrow 2^{n-1}+n+1|2^{n+1}$. It follows that the left hand side has to be a power of $2$: $2^{n-1}+n+1=2^t$. This happens when $2^{n-1}=n+1$. Since $2^{n-1}> n+1, \forall n>3$ (it can be shown inductively), it can be proven by direct computation that the only solution is $n=3$ and therefore $G\cong D_8$.
\item
$G\cong Q_{2^n}\Rightarrow |C(G)|=2^{n-2}+n\Rightarrow 2^{n-2}+n+1|2^{n+1}$. Reasoning similarly as above, it follows that there are no solutions.
\item
$G \cong SD_{2^n}\Rightarrow |C(G)|=3\cdot 2^{n-3}+n\Rightarrow 3\cdot 2^{n-3}+n+1|2^{n+1}$. The left hand side must be a power of $2$: $3\cdot 2^{n-3}+n+1=2^s$, therefore $n+1$ is divisible by $2^{n-3}$. Using $2^{n-3}> n+1, \forall n>5$ (it can be again shown inductively), it can be proved that there are no solutions.
\end{enumerate} 
\end{enumerate}
\end{description}The proof of Theorem 2.3 is now complete. 
\end{proof}

We note that Theorem 2.3 gives the following characterization for $D_8$.

\begin{corollary}
$D_8$ is the only non-cyclic $p$-group with integer harmonic mean of element orders.
\end{corollary}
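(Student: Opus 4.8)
The plan is to read the corollary off directly from the classification in Theorem \ref{th:2.2}. That theorem asserts that a finite $p$-group $G$ satisfies $h_m(G)\in\mathbb{N}$ precisely when $G$ is cyclic of order $p^{\sum_{i=1}^s p^i}$ for some $s\in\mathbb{N}^*$, or $G\simeq D_8$. Since the full family of solutions has already been pinned down, the only work left is to separate the cyclic solutions from the non-cyclic ones.

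First I would observe that every group in the first branch of the classification is by definition cyclic, so none of them is a candidate for a non-cyclic example. Next I would note that $D_8$, being non-abelian of order $8$, is certainly non-cyclic, while $h_m(D_8)\in\mathbb{N}$ holds because $D_8$ appears in the conclusion of Theorem \ref{th:2.2}; hence $D_8$ is a genuine non-cyclic $p$-group with integer harmonic mean, and the statement is not vacuous.

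To finish, I would argue by exhaustion. Let $G$ be any non-cyclic $p$-group with $h_m(G)\in\mathbb{N}$. Theorem \ref{th:2.2} forces $G$ to be either cyclic or isomorphic to $D_8$; ruling out the cyclic alternative leaves $G\simeq D_8$. Combined with the preceding paragraph, this yields that $D_8$ is the unique non-cyclic $p$-group with integer $h_m$.

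Because everything rests on Theorem \ref{th:2.2}, I do not expect any real obstacle here: the corollary merely extracts the single non-cyclic entry from an already-complete list. If one wished to make the statement self-contained rather than citing the theorem, the only nontrivial input would be confirming $h_m(D_8)\in\mathbb{N}$ through an explicit computation of $m(D_8)$, but given the theorem this verification is redundant.
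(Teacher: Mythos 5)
Your proposal is correct and matches the paper exactly: the paper states this corollary as an immediate consequence of Theorem \ref{th:2.2}, with no further argument, since $D_8$ is the only non-cyclic group in the classification. Your extra observations (that $D_8$ is non-cyclic and genuinely satisfies $h_m(D_8)\in\mathbb{N}$) are fine but, as you note yourself, redundant given the theorem.
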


An alternative characterization is the following:

\begin{proposition}\label{D8}
$D_8$ is the only dihedral group with integer harmonic mean of element orders.
\end{proposition}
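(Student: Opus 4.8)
The plan is to compute $h_m(D_n)$ explicitly and reduce the integrality condition to a small, checkable arithmetic problem. Since by the paper's convention $D_n$ has order $n$, the group exists only for $n=2k$ even, and I would split $D_n$ into its cyclic rotation subgroup $R\cong C_k$ together with the $k$ reflections, each of order $2$. This gives at once
\[
m(D_n)=m(C_k)+\frac{k}{2},\qquad h_m(D_n)=\frac{2k}{m(C_k)+k/2}=\frac{4k^2}{k^2+2A},
\]
where $A:=k\,m(C_k)=\sum_{d\mid k}\frac{k}{d}\varphi(d)\in\mathbb{N}$ is the Dirichlet convolution $(\mathrm{id}*\varphi)(k)$, multiplicative with $A(p^a)=p^{a-1}\bigl(a(p-1)+p\bigr)$.

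Next I would pin down the only possible integer values. Because $A\geq 1$ we have $h_m(D_n)<4$, and because $m(C_k)\leq (k+1)/2<3k/2$ we have $h_m(D_n)>1$; hence $h_m(D_n)\in\mathbb{N}$ forces $h_m(D_n)\in\{2,3\}$. Translating, $h_m(D_n)=2$ is equivalent to $k^2=2A$ and $h_m(D_n)=3$ to $k^2=6A$. In both equations the constant on the right has $2$-adic valuation $1$, so setting $a=v_2(k)$ (necessarily $a\geq 1$, as the right-hand sides are even) and using that the odd part of $A$ is odd together with $v_2\!\bigl(A(2^a)\bigr)=a-1+v_2(a+2)$, each equation collapses to $v_2(a+2)=a$, whose unique solution is $a=2$. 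Thus $v_2(k)=2$, and I would write $k=4j$ with $j$ odd.

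I would finish using the multiplicative quantity $g(k):=m(C_k)/k=\prod_{p^e\|k}\frac{(e+1)p-e}{p^{e+1}}$, for which $g(4)=\tfrac12$ and hence $g(k)=\tfrac12\,g(j)$. For $h_m(D_n)=2$ this reads $g(j)=1$; since every prime-power factor satisfies $g(p^e)<1$, this forces $j=1$, i.e. $k=4$ and $D_n\cong D_8$. For $h_m(D_n)=3$ it reads $g(j)=\tfrac13$ with $j$ odd, which I would rule out: over odd prime powers the factors $g(p^e)$ are maximized by $g(3)=\tfrac59$ and decrease in both $p$ and $e$, so a product of at least two of them is at most $\tfrac59\cdot\tfrac{9}{25}=\tfrac15<\tfrac13$, while a single odd prime-power factor is never exactly $\tfrac13$ (the only ones exceeding $\tfrac13$ are $g(3)=\tfrac59$ and $g(5)=\tfrac{9}{25}$, neither equal to $\tfrac13$). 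Hence $g(j)=\tfrac13$ is impossible and no dihedral group attains $h_m=3$, giving $h_m(D_n)\in\mathbb{N}\iff D_n\cong D_8$.

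I expect the main obstacle to be precisely this last step, ruling out $h_m=3$. The clean $2$-adic argument that forces $v_2(k)=2$ does not transfer to the prime $3$, because $A$ can acquire spurious factors of $3$ from other primes (for instance $A(5)=9$, so $v_3(A)$ need not reflect $v_3(k)$). This is why I would deliberately avoid a direct $3$-adic computation and instead reduce to the odd part $j$ and bound the multiplicative factors $g(p^e)$, where the inequality $g(p^e)\leq g(3)=\tfrac59$ together with the resulting bound $\tfrac15<\tfrac13$ does the decisive work.
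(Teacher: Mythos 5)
Your proof is correct. It shares the paper's skeleton --- the decomposition $m(D_{2k})=m(C_k)+\frac{k}{2}$, the resulting bound $1<h_m(D_{2k})<4$ that reduces the problem to the target values $2$ and $3$, and the multiplicativity of $m$ on cyclic groups --- but the way you resolve the resulting Diophantine conditions is genuinely different. The paper clears denominators in $\prod_i\frac{(p_i-1)(n_i+1)+1}{p_i}=\frac{n}{2}$, reads off $p_1=2$ from integrality, and applies Bernoulli's inequality factor by factor to squeeze the exponents into a contradiction unless $n=4$; for the value $3$ it only asserts that the analysis is \enquote{analogous} and otherwise defers to a later theorem. You instead package everything into the multiplicative quantities $A=(\mathrm{id}*\varphi)(k)$ and $g(k)=m(C_k)/k$: the observation that $A(p^e)$ is odd for every odd prime $p$ turns both equations $k^2=2A$ and $k^2=6A$ into the single $2$-adic condition $v_2(a+2)=a$ for $a=v_2(k)$, forcing $v_2(k)=2$, after which the strict bound $g(p^e)<1$ disposes of $g(j)=1$ (except $j=1$, giving $D_8$) and the estimate $g(3)g(5)=\frac{1}{5}<\frac{1}{3}$, combined with $g(3)=\frac{5}{9}$ and $g(5)=\frac{9}{25}$ being the only values exceeding $\frac{1}{3}$, rules out $g(j)=\frac{1}{3}$ outright. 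All the computations I checked ($A(p^a)=p^{a-1}(a(p-1)+p)$, the parity of $A(p^e)$ for odd $p$, the uniqueness of $a=2$ in $v_2(a+2)=a$, and the monotonicity of $g$ on prime powers) are correct. The payoff of your route is a complete and uniform treatment of the $h_m=3$ case, which is precisely the step the published proof leaves sketchy; the cost is slightly more machinery (valuations and monotonicity of a multiplicative function), though all of it is elementary and fully justified in your write-up.
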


\begin{proof}
\begin{equation}\label{ass}
\text{Let }D_{2n}\text{ be a dihedral group of order }2n\text{ such that }h_m(D_{2n})\in\mathbb{N}^*.
\end{equation} 
Let $n=p_1^{n_1}\cdot p_2^{n_2}\cdots p_k^{n_k}$ be the decomposition of $n$ as a product of prime factors, where $p_1<p_2<\dots<p_k$. 

Let us write $D_{2n}=\{1,r,\dots,r^{n-1}, s, sr, \dots, sr^{n-1}\}$, where $r$ is the rotation of order $\frac{2\pi}{n}$ and $s$ is the reflection around a vertex and the center of a regular polygon with $n$-sides. Obviously, $\langle r\rangle \cong C_n$ and $o(sr^i)=2, \forall i\in\overline{0,n-1}$.

If we compute $h_m(D_{2n})$, we get \[h_m(D_{2n})=\frac{|G|}{\sum_{i=0}^{n-1} \frac{1}{o(r^i)}+\sum_{i=0}^{n-1}\frac{1}{o(sr^i)}}=\frac{2n}{m(C_n)+\frac{n}{2}}.\]

Let us denote \[ \alpha:=\frac{2n}{m(C_n)+\frac{n}{2}}\in\mathbb{N}^*\]
because of assumption \eqref{ass}.
Since $m(C_n)>0$, it follows that $\alpha<4$ and therefore three cases can occur:

\begin{enumerate}[a)]
\item
$\alpha=1$ 

Then $m(C_n)=\frac{3n}{2}$\,, contradicting $m(C_n)=\sum_{a\in C_n}\frac{1}{o(a)}\leq\sum_{a\in C_n}1=n$.

\item
$\alpha=2$

It follows that 
\begin{equation}\label{n2}
m(C_n)=\frac{n}{2}.
\end{equation}
Clearly $C_n\cong \prod_{i=1}^k C_{p_i^{n_i}}$. According to Proposition \ref{propm} e), it follows that
\begin{equation}\label{prod} m(C_n)=\prod_{i=1}^k m(C_{p_i^{n_i}}).
\end{equation} Each factor in the right-hand side can be computed. Fix $i\in\overline{1,k}$. For each $j\in\overline{1,n_i}$, there are $\varphi(p_i^j)$ elements of order $p_i^j$.
 Therefore $\displaystyle m(C_{p_i^{n_i}})=1+\sum_{j=1}^{n_i}\frac{\varphi(p_i^j)}{p_i^j}=1+\sum_{j=1}^{n_i}\frac{p_i^j(1-\frac{1}{p_i})}{p_i^j}=1+\sum_{j=1}^{n_i}\left(1-\frac{1}{p_i}\right)=\frac{p_i-1+1}{p_i}+\frac{n_i}{p_i}\left(p_i-1\right)=\frac{(n_i+1)(p_i-1)+1}{p_i}.$
Using \eqref{n2} and \eqref{prod}, it follows that \[ \prod_{i=1}^k \left((p_i-1)(n_i+1)+1\right)=\frac{1}{2} p_1^{n_1+1}\cdot \dots\cdot p_n^{n_k+1}.\]
Since the left-hand side is an integer, it follows that $p_1=2$, so
\[(n_1+2)\prod_{i=2}^k((p_i-1)(n_i+1)+1)=2^{n_1}p_2^{n_2+1}\dots p_k^{n_k+1}\]
Bernoulli's inequality gives:
\[p_i^{n_i+1}=\left(1+(p_i-1)\right)^{n_i+1}>(p_i-1)(n_i+1)+1, \forall i=\overline{2,k},\] therefore $n_1+2>2^{n_1}$, i.e. $n_1=1$.

If $k>1$ it follows that
\[ 3\prod_{i=2}^k \left((p_i-1)(n_i+1)+1\right)=2p_2^{n_2+1}\cdot\dots\cdot p_k^{n_k+1}, \] therefore $p_2=3$ and
\[ 3(2n_2+3)\prod_{i=3}^k \left((p_i-1)(n_i+1)+1\right)=2\cdot 3^{n_2+1}p_3^{n_3+1}\cdot \dots p_k^{n+1}.\]
Then $2n_2+3\geq 2\cdot 3^{n_2}$, which does not yield solutions. Thus the assumption that $k>1$ does not hold, and consequently the only solution is $k=1$ and $n_1=2$, which gives $n=4$.

\item
$\alpha=3$ 

The analysis is analogous to the previous case. Alternatively, the result follows from Theorem 2.6.
\end{enumerate}
\end{proof}

\begin{remark}
Let us note that we can build nilpotent groups $G$ with $h_m(G)\in\mathbb{N}$ as direct products of $p$-groups of the type of the ones in Theorem \ref{th:2.2}. 

In addition, we can construct non-nilpotent groups $G$ with this property, for example $G=\SL(2,3)\times  C_{7^7}$. The idea is to start with a non-nilpotent group $G_1$ and to do a direct product with a cyclic group $G_2$ such that the denominator of $h_m(G_1)$ reduces. In the example above:
\[h_m(G)=h_m(\SL(2,3))\cdot h_m(C_{7^7})=\frac{24}{7}\cdot 7^6=24\cdot 7^5.\]
\end{remark}

In what follows, we will study the integer values of the function $h_m$.

Obviously, we have $h_m(G)=1$ if and only if $G$ is the trivial group.

In addition, Proposition \ref{D8} and its proof give that $h_m(C_4)=h_m(D_8)=2$. We can show that these are the only groups $G$ with $h_m(G)=2$.

\begin{theorem}\label{th:2.5}
Let $G$ be a finite group. Then $h_m(G)=2$ if and only if $G\cong C_4$ or $G\cong D_8$.
\end{theorem}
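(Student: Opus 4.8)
The plan is to prove the forward direction, since $h_m(C_4) = h_m(D_8) = 2$ was already recorded in the proof of Proposition \ref{D8}. So I would assume $h_m(G) = 2$, equivalently $m(G) = |G|/2$, and the strategy is to first force $G$ to be a $2$-group and then invoke Theorem \ref{th:2.2} to pin down the isomorphism type.

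First I would convert $m(G) = |G|/2$ into a constraint on the element-order distribution. Writing $n_d$ for the number of elements of order $d$, we have $\sum_d n_d = |G|$ and $\sum_d n_d/d = |G|/2$; subtracting these relations and isolating the identity term gives
\[
\sum_{d\geq 3} n_d\,\frac{d-2}{d} = 1.
\]
I would then substitute $n_d = \varphi(d)\,c_d$, where $c_d$ is the number of cyclic subgroups of order $d$, so that each cyclic subgroup of order $d$ contributes exactly $\varphi(d)(d-2)/d$ to the left-hand side.

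The key step is a bounding argument on these per-subgroup contributions. For every $d \geq 5$ one has $\varphi(d)\geq 2$ and $(d-2)/d \geq 3/5$, so each such contribution is at least $6/5 > 1$; since all terms are nonnegative and sum to $1$, no cyclic subgroup of order $\geq 5$ can occur. The contributions for $d=4$ and $d=3$ are $1$ and $2/3$ respectively, so the surviving equation $\tfrac{2}{3}c_3 + c_4 = 1$ has the unique nonnegative-integer solution $c_3 = 0$, $c_4 = 1$. Hence every element of $G$ has order in $\{1,2,4\}$, which makes $G$ a $2$-group with $\exp(G)\leq 4$. I expect this counting-and-bounding step to be the main obstacle, as it is where the arithmetic of the problem is genuinely used; the rest is bookkeeping.

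Finally, since $h_m(G) = 2 \in \mathbb{N}$ and $G$ is a $2$-group, Theorem \ref{th:2.2} leaves only two possibilities: $G \cong D_8$, or $G$ cyclic of order $2^{\sum_{i=1}^s 2^i}$. In the cyclic case the exponent equals $2^{\sum_{i=1}^s 2^i}$, which is at least $2^6$ as soon as $s \geq 2$; combined with $\exp(G)\leq 4$ this forces $s = 1$ and hence $G \cong C_4$. Thus the only groups with $h_m(G)=2$ are $C_4$ and $D_8$, as claimed.
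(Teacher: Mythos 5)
Your proposal is correct, and while it follows the same overall skeleton as the paper (reduce to a linear constraint on the cyclic subgroups, then invoke Theorem \ref{th:2.2}), the way you derive that constraint is genuinely different and in fact sharper. The paper routes through Lemma \ref{lemma:2.1}: from $h_m(G)=2$ and $2\mid |G|$ it gets the \emph{inequality} $|C(G)|\geq |G|-1$, i.e.\ $\sum_i n_i'(\varphi(d_i)-1)\leq 1$, whose weights are $\varphi(d)-1$; this still admits the possibilities of an elementary abelian $2$-group ($r=2$) and of a single cyclic subgroup of order $3$ (weight exactly $1$), each of which the paper must then kill by a separate computation ($h_m(G)=\tfrac{6|G|}{3|G|+1}\notin\mathbb{N}$ in the latter case). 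You instead work with the \emph{exact} identity $\sum_{d\geq 3}\varphi(d)c_d\,\tfrac{d-2}{d}=1$ coming directly from $m(G)=|G|/2$; since the weight of a cyclic subgroup of order $3$ is $2/3$ and of order $d\geq 5$ is $\geq 6/5$, the equation $\tfrac{2}{3}c_3+c_4=1$ forces $c_3=0$, $c_4=1$ outright, and the case of no elements of order $\geq 3$ is excluded because the left side would be $0\neq 1$. So your derivation buys a cleaner endgame: both degenerate sub-cases of the paper disappear automatically, at the cost of not reusing the already-established Lemma \ref{lemma:2.1}. The final step (exponent $4$ versus cyclic order $2^{\sum_{i=1}^{s}2^i}$ forcing $s=1$) is a correct way to extract $C_4$ from Theorem \ref{th:2.2}.
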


\begin{proof}
Since $h_m(G)=2$, it follows that $2||G|$, therefore \eqref{eq:1} becomes 
\[ 2\geq \frac{2|G|}{|C(G)|+1}\,,\]
i.e. 
\begin{equation}\label{eq:2}
|C(G)|\geq |G|-1.
\end{equation}

Using the same notations as in Lemma \ref{lemma:2.1}, it follows that $d_2=2$ and 
\[|G|=\sum_{i=1}^r n_i=\sum_{i=1}^r n_i'\varphi(d_i)\]
\[|C(G)|=\sum_{i=1}^r n_i',\]
therefore \eqref{eq:2} becomes
\begin{equation}\label{9}
 \sum_{i=1}^r n_i'(\varphi(d_i)-1)\leq 1.\end{equation}

Since $\varphi(d_i)>1$, for $d_i>2$, we identify two possible cases:

\begin{enumerate}[a)]
\item
$r=2$ 

It follows that $G$ is an elementary abelian $2$-group. Then $h_m(G)\not\in\mathbb{N}$ by Theorem \ref{th:2.2}, which is a contradiction.

\item
$r=3$ 

Clearly, we have $n_3'=1$ and $\varphi(d_3)=2$, and so $d_3=3$ or $d_3=4$. 

If $d_3=3$, then $G$ has $1$ element of order $1$, $2$ elements of order $3$ and $|G|-3$ elements of order $2$. Thus $h_m(G)=\frac{6|G|}{3|G|+1}\notin\mathbb{N}$, which is a contradiction.

If $d_3=4$, then $h_m(G)=2$. This means that $G$ is a $2$-group of exponent $4$ with a single cyclic subgroup of order $4$. Using Theorem \ref{th:2.2}, we obtain  $G\cong C_4$ or $G\cong D_8$, as desired.
\end{enumerate}
\end{proof}

\begin{theorem}
The finite non-trivial groups $G$ with $h_m(G)\leq 2$ are $C_2^n, n\in\mathbb{N}, C_3, S_3, C_4$ and $D_8$. 
\end{theorem}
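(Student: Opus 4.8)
The plan is to reduce the inequality $h_m(G)\le 2$ to a single constraint on the cyclic subgroups of $G$, exactly as in the proof of Theorem \ref{th:2.5}. Keeping the notation $d_1=1,d_2,\dots,d_r$ for the distinct element orders and writing $n_d'$ for the number of cyclic subgroups of order $d$, Lemma \ref{lemma:2.1} gives $|G|=\sum_i n_i'\varphi(d_i)$ and $m(G)=\sum_i n_i'\varphi(d_i)/d_i$. Hence $h_m(G)\le 2$ is equivalent to $\sum_i n_i'\varphi(d_i)(d_i-2)/d_i\le 0$. The term $d_1=1$ contributes $-1$, the term $d=2$ (when present) contributes $0$, and every $d_i\ge 3$ contributes a positive amount, so after moving the $-1$ to the right the condition becomes
\[\sum_{d\ge 3} n_d'\, f(d)\le 1,\qquad f(d):=\varphi(d)\frac{d-2}{d}.\]

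Next I would record the elementary estimate $f(d)\ge \tfrac23$ for all $d\ge 3$, obtained from $\varphi(d)\ge 2$ and $1-2/d\ge 1/3$, together with the exact small values $f(3)=\tfrac23$, $f(4)=1$, and $f(d)\ge \tfrac43>1$ for every $d\ge 5$ (indeed $\varphi(d)=2$ forces $d\in\{3,4,6\}$, so for $d\ge 5$ either $d=6$ with $f(6)=\tfrac43$, or $\varphi(d)\ge 4$ with $f(d)\ge 12/5$). Consequently the displayed inequality can hold only if $G$ has \emph{at most one} cyclic subgroup of order $\ge 3$, and such a subgroup must have order $3$ or $4$. This splits the analysis into three cases.

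If $G$ has no cyclic subgroup of order $\ge 3$, then every non-identity element has order $2$, so $G$ is a non-trivial elementary abelian $2$-group $C_2^{\,n}$ ($n\ge 1$), and a direct computation gives $h_m(C_2^{\,n})=2^{n+1}/(2^n+1)<2$. If $G$ has a unique cyclic subgroup and it has order $4$, then the constraint holds with equality, so $h_m(G)=2$, and Theorem \ref{th:2.5} yields $G\cong C_4$ or $G\cong D_8$. The remaining case, a unique cyclic subgroup $N\cong C_3$ and no cyclic subgroup of order $\ge 4$, is the main obstacle and requires a short structural argument. Here every element of $G$ has order $1$, $2$ or $3$, so $|G|=2^a3^b$ by Cauchy; uniqueness of $N$ forces the Sylow $3$-subgroup to be exactly $C_3$ (an order-$9$ subgroup would be $C_9$ or $C_3^2$, producing an element of order $9$ or a second subgroup of order $3$), whence $N$ is normal. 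Writing $G=N\rtimes P$ with $P$ a Sylow $2$-subgroup (Schur--Zassenhaus applies since $N$ is normal of order coprime to $|P|$), every element of $P$ has order $1$ or $2$, so $P$ is elementary abelian; the action of $P$ on $N$ has image in $\operatorname{Aut}(C_3)\cong C_2$ and kernel $C_P(N)$. The key observation is that a nontrivial element of $C_P(N)$, being an involution commuting with a generator $x$ of $N$, would give an element $tx$ of order $6$ and hence a cyclic subgroup of order $6$, which is excluded. Thus $C_P(N)=1$, so $P$ embeds in $\operatorname{Aut}(C_3)\cong C_2$, leaving $P=1$ (giving $G\cong C_3$) or $P\cong C_2$ acting by inversion (giving $G\cong S_3$).

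Finally I would close the converse direction by checking that each listed group indeed satisfies $h_m\le 2$: the elementary abelian case was computed above, while $h_m(C_3)=9/5$ and $h_m(S_3)=36/19$ follow by direct evaluation, and $h_m(C_4)=h_m(D_8)=2$ come from Proposition \ref{D8} and Theorem \ref{th:2.5}. The only genuinely non-routine step is the classification in the order-$3$ case; everything else is either the bookkeeping already set up in Lemma \ref{lemma:2.1} or a finite check of the values of $f$.
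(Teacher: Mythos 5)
Your proof is correct, and its endgame coincides with the paper's: the three-way split on the (at most one) cyclic subgroup of order $\ge 3$, the Sylow/centralizer argument that pins down $C_3$ and $S_3$ in the order-$3$ branch, and the appeal to Theorem \ref{th:2.5} in the order-$4$ branch are exactly the paper's steps. Where you genuinely differ is in how the constraint on cyclic subgroups is derived. The paper splits on the parity of $|G|$: for even order it feeds $p=2$ into the lower bound of Lemma \ref{lemma:2.1} to get $|C(G)|\ge |G|-1$, i.e. $\sum_i n_i'(\varphi(d_i)-1)\le 1$, and it must then handle odd order separately with the cruder estimate $h_m(G)\ge 3|G|/(|G|+2)$, which forces $|G|\le 4$ and hence $G\cong C_3$. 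You instead rewrite $h_m(G)\le 2$ as the exact equivalence $\sum_{d\ge 3} n_d'\,\varphi(d)(d-2)/d\le 1$, valid for every finite group, which eliminates the parity split, absorbs $C_3$ into the same case analysis as $S_3$, and shows for free that equality (hence $h_m(G)=2$) occurs precisely in the order-$4$ branch. The small price is the extra check that your weight $f(d)=\varphi(d)(d-2)/d$ exceeds $1$ for all $d\ge 5$ (including $d=6$, where $\varphi(6)=2$ — a value the paper's weight $\varphi(d)-1$ does not by itself exclude either), which you carry out correctly. Net effect: same theorem, same structural core, but your single inequality is the cleaner bookkeeping and makes the necessary-and-sufficient nature of the constraint explicit rather than relying on a one-sided bound.
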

\begin{proof}
Using the same ideas and notations as in the proof of Theorem \ref{th:2.5}, we can classify the finite groups $G$ with $h_m(G)\leq 2$. We identify the following cases:
\begin{enumerate}[a)]
\item
$|G|$ is odd

It follows that 
\[\begin{aligned}
h_m(G) &=\frac{|G|}{m(G)}\geq \frac{|G|}{1+\frac{|G|-1}{p_1}}\geq \frac{|G|}{1+\frac{|G|-1}{3}}=\frac{3|G|}{|G|+2}\Rightarrow\\
\frac{3|G|}{|G|+2}\leq & 2\Rightarrow |G|\leq 4\Rightarrow |G|=3\Rightarrow G\cong C_3. &
\end{aligned}\]

\item
$|G|$ is even

It follows that \eqref{eq:2} holds. There are two possibilities:
\begin{enumerate}[i)]
\item
$r=2$ 

Then $G\cong C_2^n$, $n\in\mathbb{N}$.

\item
$r=3$ 

Then there are two possibilities:
\begin{description}
\item[$\mathbf{d_3=3}$]
Inequality \eqref{9} gives that $n_3'=1$ which means that $G$ has a unique cyclic subgroup of order $3$, let us denote this by $H$. Since $G$ does not contain cyclic subgroups of order $>3$, it follows that $H$ is the only $3$-Sylow subgroup of $G$. It is also normal. Since $|G|$ even, it follows that there is also a $2$-Sylow subgroup of order $2$. Thus $G=HK$. Since $G$ does not have cyclic subgroups of order $6$, it follows that $C_K(H)=1$, therefore $|K|=|{\rm Aut}(H)|=2$. We conclude that $G\cong S_3$.
\item[$\mathbf{d_3=4}$]
$G\cong C_4 \text{ or }G\cong D_8, \text{ for }d_3=4.$
\end{description}
\end{enumerate}
\end{enumerate}

This gives the conclusion. Moreover, we have $$\min\{ h_m(G)| G\text{ finite non-trivial}\}=\frac{4}{3}\,.$$
The minimum is obtained for $C_2$.
\end{proof}

Next we will focus on finite groups $G$ with $h_m(G)=3$. Note that the smallest example of such a group is $SmallGroup(12,1)$. We are not able to determine all these groups, but we can prove that they have even order and are not nilpotent. 

\begin{proposition}
There are no finite groups $G$ of odd order with $h_m(G)=3$.
\end{proposition}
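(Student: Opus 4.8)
The plan is to exploit the equivalence $h_m(G)=3\iff m(G)=\tfrac{|G|}{3}$ and to compare this with the sharp upper bound for $m$ that comes from the smallest prime divisor, exactly as in the odd-order case of the preceding theorem. Since $|G|$ is odd, its smallest prime divisor is at least $3$, so every non-identity element has order $\ge 3$ and hence $1/o(a)\le 1/3$. This yields $m(G)\le 1+\tfrac{|G|-1}{3}=\tfrac{|G|+2}{3}$. Substituting the hypothesis $m(G)=\tfrac{|G|}{3}$, the bound and the actual value differ by precisely $\tfrac{2}{3}$, so the total slack in this inequality is forced to equal $2/3$.

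The next step is to rewrite that slack as a sum over cyclic subgroups. Writing $n_d'$ for the number of cyclic subgroups of order $d$ (so that there are $n_d'\varphi(d)$ elements of order $d$), and recalling that the slack of the bound above is $\sum_{a\ne 1}\big(\tfrac13-\tfrac1{o(a)}\big)$, I would group elements by order to obtain
\[
\sum_{a\ne 1}\Big(\tfrac13-\tfrac{1}{o(a)}\Big)=\sum_{d>1}n_d'\,\varphi(d)\Big(\tfrac13-\tfrac1d\Big)=\sum_{d>3}n_d'\,c_d,\qquad c_d:=\varphi(d)\Big(\tfrac13-\tfrac1d\Big),
\]
since the order-$3$ terms vanish and, $|G|$ being odd, no even order occurs. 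Thus the whole problem reduces to solving $\sum_{d>3}n_d'\,c_d=\tfrac23$ in non-negative integers $n_d'$, where $d$ ranges over odd integers exceeding $3$.

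The decisive estimate is that $c_5=\tfrac{8}{15}<\tfrac23$ while $c_d>\tfrac23$ for every odd $d\ge 7$: indeed $\varphi(d)\ge 6$ for all odd $d\ge 7$ (the only odd $d$ with $\varphi(d)<6$ are $1,3,5$) and $\tfrac13-\tfrac1d\ge\tfrac{4}{21}$ there, so $c_d\ge \tfrac{8}{7}>\tfrac23$. Consequently no cyclic subgroup of order $\ge 7$ can occur, a single one already overshooting $2/3$, and the equation collapses to $n_5'\cdot\tfrac{8}{15}=\tfrac23$, i.e. $8n_5'=10$, which has no integer solution; the case $n_5'=0$ fails as well, since then the left-hand side is $0\ne\tfrac23$. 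This contradiction shows that no group of odd order can satisfy $h_m(G)=3$.

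I expect the main obstacle to be the uniform lower bound $c_d>\tfrac23$ for odd $d\ge 7$, which is precisely what makes the integrality argument conclusive; it rests only on the elementary facts that $\varphi(d)\ge 6$ for odd $d\ge 7$ and that $\tfrac13-\tfrac1d$ increases in $d$. Everything else is bookkeeping. It is worth noting that the argument never needs to assume $3\mid|G|$ or $5\mid|G|$: if the relevant subgroups are simply absent, the target value $2/3$ cannot be produced at all, and the oddness of $|G|$ enters only through the absence of even orders, which keeps all nontrivial terms $c_d$ nonnegative.
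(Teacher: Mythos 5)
Your proof is correct, and it takes a genuinely different route from the paper's. The paper splits into the cases $\exp(G)=3$ and $\exp(G)\neq 3$; in the latter it runs a group-theoretic case analysis (whether $9\mid\exp(G)$, the smallest prime $p\neq 3$ dividing $\exp(G)$, the number of Sylow $5$-subgroups, and the fact that a group of order $15$ is cyclic) to show that $G$ must contain at least $6$ elements of order $\geq 5$, and then estimates $h_m(G)\geq \frac{15|G|}{5|G|-2}>3$. You instead turn the same starting bound $m(G)\leq 1+\frac{|G|-1}{3}$ into an \emph{exact} identity for the slack, $\sum_{d>3}n_d'\,\varphi(d)\left(\frac13-\frac1d\right)=\frac23$, and kill it by pure arithmetic: the coefficient at $d=5$ is $\frac{8}{15}$, every coefficient at odd $d\geq 7$ exceeds $\frac23$ (since $\varphi(d)\geq 6$ for odd $d\geq 7$), and $\frac{8}{15}n_5'=\frac23$ has no integer solution. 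Your key facts check out ($\varphi(d)=4$ only for $d\in\{5,8,10,12\}$, so indeed $5$ is the last odd $d$ with $\varphi(d)<6$, and $6\cdot\frac{4}{21}=\frac87>\frac23$). What your approach buys is a shorter, more elementary and entirely self-contained argument: no Sylow theory, no appeal to the structure of groups of order $15$, and no need for the paper's (unproved there) preliminary claim that $3\mid |G|$. What the paper's approach buys is the slightly stronger quantitative output $h_m(G)\geq\frac{15|G|}{5|G|-2}$ and a template (``find enough elements of large order'') that it reuses in nearby results; your exact-deficit method is more rigid but, for this particular statement, cleaner.
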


\begin{proof}
Let $G$ be a finite group of odd order such that $h_m(G)=3$. Then $3||G|$. Let $H\leq G$ with $|H|=3$. We identify the following cases:
\begin{description}
\item[Case 1:] $\exp(G)=3$

Then $$h_m(G)=\frac{|G|}{1+\frac{|G|-1}{3}}=\frac{3|G|}{|G|+2}\neq 3,$$which is a contradiction.

\item[Case 2:] $\exp(G)\neq 3$

We will prove that
\begin{equation}\label{eq:3}
G\text{ has at least }6\text{ elements of order }\geq 5
\end{equation}
\end{description}

We identify the cases:
\begin{enumerate}[a)]
\item $3^2|\exp(G)$

Then $G$ has at least a cyclic subgroup of order $3^2$ and so at least $6=\varphi(3^2)$ elements of order $9$.

\item $3^2\nmid \exp(G)$.

Let $p$ the smallest prime $\neq 3$ such that $p|\exp(G)$. If $p\geq 7$, then $G$ has at least a cyclic subgroup of order $p$ and consequently at least $\varphi(p)=p-1$ elements of order $p$. If $p=5$, there are two sub-cases:
\begin{enumerate}[i)] 
\item $G$ has only one subgroup $K$ with $|K|=5$. It follows that $K\lhd G$, therefore $KH\leq G$ and $|KH|=15$. Then $KH$ is cyclic and it possesses $\varphi(15)=8$ elements of order $15$.
\item $G$ has at least $3$ subgroups of order $5$. Then $G$ has at least $3\varphi(5)=12$ elements of order $5$.
\end{enumerate}
This concludes the proof of \eqref{eq:3}. We get
\[h_m(G)\geq \frac{|G|}{1+\frac{6}{5}+\frac{|G|-7}{3}}=\frac{15|G|}{5|G|-2}>3,\] a contradiction which completes the proof. 
\end{enumerate}
\end{proof}

\begin{proposition}
There are no finite nilpotent groups $G$ with $h_m(G)=3$.
\end{proposition}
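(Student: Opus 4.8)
The plan is to exploit the multiplicativity of $h_m$ together with the exact value of $h_m$ on $p$-groups coming from the equality case of Lemma~\ref{lemma:2.1}. Since $G$ is nilpotent and nontrivial, it is the internal direct product of its Sylow subgroups, $G\cong P_1\times\cdots\times P_k$ with $P_i$ a nontrivial Sylow $p_i$-subgroup and $p_1<\cdots<p_k$; as these have pairwise coprime orders, the $h_m$-analogue of Proposition~\ref{propm}(e) gives $h_m(G)=\prod_{i=1}^k h_m(P_i)$. For a $p$-group $P$ of order $p^n$, Lemma~\ref{lemma:2.1} yields the exact value $h_m(P)=p^{n+1}/\bigl((p-1)|C(P)|+1\bigr)$. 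I would first record the uniform lower bound $h_m(P)\ge p^2/(2p-1)$: counting each element by the unique cyclic subgroup it generates gives $\sum_{H\in C(P)}\varphi(|H|)=|P|$, and since $\varphi(|H|)\ge p-1$ for every nontrivial cyclic $H$, this forces $|C(P)|\le 1+(p^n-1)/(p-1)$; substituting into the exact formula gives $h_m(P)\ge p^{n+1}/(p^n+p-1)\ge p^2/(2p-1)$, the last quantity being increasing in both $n$ and $p$.

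Writing $f(p)=p^2/(2p-1)$ (so $f(2)=\tfrac43,\ f(3)=\tfrac95,\ f(5)=\tfrac{25}{9}$), the next step is to bound the number of primes. If $k\ge 3$ then $p_3\ge 5$ and $h_m(G)\ge f(2)f(3)f(5)=\tfrac{20}{3}>3$, a contradiction, so $k\le 2$. If $k=2$ then $h_m(G)\ge f(p_1)f(p_2)$, and a quick check shows this product already exceeds $3$ for every admissible pair except $\{p_1,p_2\}=\{2,3\}$ (for instance $f(2)f(5)=\tfrac{100}{27}>3$ and $f(3)f(5)=5>3$). Hence the only survivors are $k=1$, or $k=2$ with primes exactly $\{2,3\}$.

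The case $k=1$ is dispatched using Theorem~\ref{th:2.2}: a $p$-group with $h_m=3\in\mathbb N$ must be $D_8$ (where $h_m=2$) or a cyclic group $C_{p^n}$ with $n=\sum_{i=1}^s p^i$. In the cyclic case the integrality condition reads $(p-1)(n+1)+1=p^t$, so $h_m(C_{p^n})=p^{\,n+1-t}$ is a power of $p$; this equals $3$ only if $p=3$ and $t=n$, which reduces to $2n+3=3^n$, an equation with no solution in $\mathbb N^*$. For the remaining case $k=2$ with primes $\{2,3\}$, I would run a $2$-adic valuation argument. Writing $h_m(P_2)=2^{n+1}/(|C(P_2)|+1)$ and $h_m(P_3)=3^{m+1}/(2|C(P_3)|+1)$, the requirement $h_m(G)=3$ becomes
\[
2^{n+1}3^{m+1}=3\,(|C(P_2)|+1)\,(2|C(P_3)|+1).
\]
Comparing powers of $2$ and using that $2|C(P_3)|+1$ is odd forces $2^{n+1}\mid |C(P_2)|+1$; but $0<|C(P_2)|+1\le 2^n+1<2^{n+1}$, which is impossible. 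This contradiction finishes the case and hence the proposition.

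I expect the genuinely delicate point to be this final parity/valuation step: the magnitude estimates in the first two paragraphs are routine once the bound $h_m(P)\ge p^2/(2p-1)$ is available, whereas the whole proposition is actually won by the observation that the odd denominator $2|C(P_3)|+1$ contributed by the $3$-part can never cancel the full power $2^{n+1}$ sitting in the numerator of the $2$-part. I would take care to state the $p$-group formula and the counting inequality $|C(P)|\le 1+(p^n-1)/(p-1)$ explicitly, since both the lower bound and the valuation argument rely on them.
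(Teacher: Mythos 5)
Your proof is correct, but it takes a genuinely different route from the paper's. Both arguments open the same way, decomposing the nilpotent group into its Sylow subgroups and using multiplicativity of $h_m$; after that they diverge. The paper argues that each Sylow factor $G_i$ has $h_m(G_i)<2$, invokes its classification of non-trivial groups with $h_m\le 2$ to conclude $G_i\cong C_2^n$ or $C_3$, and checks that no product of the values $\frac{2^{n+1}}{2^n+1}$ and $\frac{9}{5}$ equals $3$ (the single-prime case being delegated to the odd-order result). You instead extract from the equality case of Lemma~\ref{lemma:2.1} the exact formula $h_m(P)=p^{n+1}/\bigl((p-1)|C(P)|+1\bigr)$, combine it with the count $\sum_{H\in C(P)}\varphi(|H|)=|P|$ to get the uniform bound $h_m(P)\ge p^2/(2p-1)$, and use monotonicity to eliminate every prime set except a single prime and $\{2,3\}$; the single-prime case falls to Theorem~\ref{th:2.2} (an integer value of $h_m$ on a $p$-group is a power of $p$, and $2n+3=3^n$ has no solution), and the $\{2,3\}$ case falls to the $2$-adic valuation argument, since the odd denominator $2|C(P_3)|+1$ cannot cancel $2^{n+1}$ while $|C(P_2)|+1\le 2^n+1<2^{n+1}$. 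I checked the numerics ($f(2)f(3)f(5)=\frac{20}{3}$, $f(2)f(5)=\frac{100}{27}$, $f(3)f(5)=5$, $f(2)f(3)=\frac{12}{5}<3$) and the monotonicity claims; everything holds. What your route buys is self-containedness: it relies only on Lemma~\ref{lemma:2.1} and Theorem~\ref{th:2.2}, not on the $h_m\le 2$ classification, and it avoids the paper's rather terse step asserting $h_m(G_i)<2$ for every factor (a priori the bound $h_m(G_j)\ge\frac{4}{3}$ for the other factors only yields $h_m(G_i)\le\frac{9}{4}$, so that step needs a little extra justification as written). The paper's route is shorter on the page because it reuses results already proved.
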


\begin{proof}
Assume that $G$ is a finite nilpotent group such that $h_m(G)=3$. If $G$ is a $p$-group, then $p=3$ and the conclusion follows from Proposition 2.9. If $G$ is not a $p$-group, then it can be written as a direct product of at least two $p$-groups, say $G=G_1\times\cdots\times G_k$ with $k\geq 2$. Since
\begin{equation}
h_m(G)=h_m(G_1)\cdots h_m(G_k),\nonumber
\end{equation}we get $h_m(G_i)<2$, $\forall\, i=1,...,k$. Now Theorem 2.8 implies that $G_i=C_2^n$ for some $n\in\mathbb{N}$ or $G_i=C_3$, and therefore $h_m(G_i)=\frac{2^{n+1}}{2^n+1}$ for some $n\in\mathbb{N}$ or $h_m(G_i)=\frac{9}{5}$\,. We remark that any product of these numbers is not $3$, contradicting our assumption.
\end{proof}

Finally, we note that the results so far leave the following open question:

\begin{question}
Which are the integer values contained in $\im(h_m)$? 
\end{question}
\bigskip

\begin{acknowledgements}
The authors are grateful to the reviewer for their remarks which improved the previous version of the paper.
\end{acknowledgements}

\begin{funding}
The authors did not receive support from any organization for the submitted work.
\end{funding}

\begin{conflicts}
The authors declare that they have no conflict of interest.
\end{conflicts}

\end{document}